\newtheorem{defi}{Definition}
\newcommand{\brdef}{\begin{defi}}
\newcommand{\erdef}{\end{defi}}
\newtheorem{cor}{Corollary}
\newcommand{\bcor}{\begin{cor}}
\newcommand{\ecor}{\end{cor}}
\newtheorem{thm}{Theorem}
\newcommand{\bth}{\begin{thm}}
\newcommand{\eth}{\end{thm}}
\newtheorem{lem}{Lemma}
\newcommand{\ble}{\begin{lem}}
\newcommand{\ele}{\end{lem}}
\def\pn{\par\noindent}
\numberwithin{equation}{section}
\begin{document}
\begin{center}
{\large \textbf{Results on para-Sasakian manifold admitting a quarter symmetric metric connection}} \\
\
\\
Vishnuvardhana. S.V., Venkatesha
\end{center}

\begin{quotation}
{\bf Abstract:} In this paper we have studied pseudosymmetric, Ricci-pseudosymmetric and projectively pseudosymmetric para-Sasakian manifold admitting a quarter-symmetric metric connection and constructed examples of 3-dimensional and 5-dimensional para-Sasakian manifold admitting a quarter-symmetric metric connection to verify our results.
\\
\textbf{Key Words:} Para-Sasakian manifold, pseudosymmetric, Ricci-pseudosymmetric, projectively pseudosymmetric, quarter-symmetric metric connection.
\\
\textbf{AMS Subject Classification:} 53C35,53D40.
\end{quotation}
\section{Introduction\label{sec1}}
\par One of the most important geometric property of a space is symmetry. Spaces admitting some sense of symmetry play an important role in differential geometry and general relativity. Cartan \cite{ECartan} introduced locally symmetric spaces, i.e., the Riemannian manifold $(M, g)$ for which $\nabla R=0$, where $\nabla$ denotes the Levi-Civita connection of the metric. The integrability condition of $\nabla R=0$ is $R\cdot R=0$. Thus, every locally symmetric space satisfies $R\cdot R=0$, whereby the first R stands for the curvature operator of $(M, g)$, i.e., for tangent vector fields $X$ and $Y$ one has $R(X, Y)= \nabla_{X}\nabla_{Y} - \nabla_{Y}\nabla_{X} -\nabla_{[X, Y]}$, which acts as a derivation on the second $R$ which stands for the Riemann-Christoffel curvature tensor. The converse however does not hold in general. The
spaces for which $R\cdot R=0$ holds at every point were called semi-symmetric spaces and which were classified by Szabo \cite{ZISzabo}.

\par Semisymmetric manifolds form a subclass of the class of pseudosymmetric manifolds. In some spaces $R\cdot R$ is not identically zero, these turn
out to be the pseudo-symmetric spaces of Deszcz \cite{RDeszcz, RDeszczSYaprak, RDMGMHGZ}, which are characterised by the condition $R\cdot R = L \,Q(g, R)$, where $L$ is a real function on $M$ and $Q(g, R)$ is the Tachibana tensor of $M$.
\par If at every point of $M$ the curvature tensor satisfies the condition
\begin{equation}
 R(X, Y)\cdot \CMcal{J}=L_{\CMcal{J}}[(X\wedge_{g} Y)\cdot \CMcal{J}], \label{I1}
\end{equation}
then a Riemannian manifold $M$ is called pseudosymmetric (resp., Ricci-pseudosymmetric, projectively pseudosymmetric) when $\CMcal{J}=R (resp., S, P)$ . Here $(X\wedge_{g} Y)$ is an endomorphism and is defined by $(X\wedge_{g} Y)Z=g(Y, Z)X-g(X, Z)Y$ and $L_{\CMcal{J}}$ is some function on $U_{\CMcal{J}} = \{x \in M : \CMcal{J} \neq 0\}$ at $x$. A geometric interpretation of
the notion of pseudosymmetry is given in \cite{SHLV}. It is also easy to see that every
pseudosymmetric manifold is Ricci-pseudosymmetric, but the converse is not true.

\par An analogue to the almost contact structure, the notion of almost paracontact structure was introduced by Sato \cite{ISato}. An almost contact manifold is always odd-dimensional but an almost paracontact manifold could be of even dimension as well. Kaneyuki and Williams \cite{SKFLW} studied the almost paracontact structure on a pseudo-Riemannian manifold. Recently, almost paracontact geometry in particular, para-Sasakian geometry has taking interest, because of its interplay with the theory of para-Kahler manifolds and its role in pseudo-Riemannian geometry and mathematical physics (\cite{DVAVCASGTL, VCCMTMFS, VCMALLS}, etc.,).

\par As a generalization of semi-symmetric connection, quarter-symmetric connection was introduced. Quarter-symmetric connection on a differentiable manifold with affine connection was defined and studied by Golab \cite{SGolab}. From thereafter many geometers studied this connection on different manifolds.

\par Para-Sasakian manifold with respect to quarter-symmetric metric connection was studied by De et.al., [\cite{KMandalUCDe, AKMUCD}], Pradeep Kumar et.al., \cite{KTPKVCSB} and Bisht and Shanker \cite{LBSS}.

\par Motivated by the above studies in this article we study properties of projective curvature tensor on para-Sasakian manifold admitting a quarter-symmetric metric connection. The organization of the paper is as follows: In Section \ref{sec2}, we present some basic notions of para-Sasakian manifold and quarter-symmetric metric connection on it. Section \ref{sec3} and \ref{sec4} are respectively devoted to study the pseudosymmetric and Ricci-pseudosymmetric para-Sasakian manifold admitting a quarter-symmetric metric connection. Here we prove that if a para-Sasakian manifold $M^n$ admitting a quarter-symmetric metric connection is Pseudosymmetric (resp., Ricci pseudosymmetric) then $M^n$ is an Einstein manifold with respect to quarter-symmetric metric connection or it satisfies $L_{\tilde{R}}=-2$ (resp., $L_{\tilde{S}}=-2$). Section \ref{sec5} and \ref{sec6} are concerned with projectively flat and projectively pseudosymmetric para-Sasakian manifold $M^n$ admitting a quarter-symmetric metric connection. Finally, we construct examples of 3-dimensional and 5-dimensional para-Sasakian manifold admitting a quarter-symmetric metric connection and we find some of its geometric characteristics.

\section{Preliminaries\label{sec2}}
 A differential manifold $M^{n}$ is said to admit an almost paracontact Riemannian structure $(\phi, \xi, \eta, g)$, where $\phi$ is a tensor field of type (1, 1), $\xi$ is a vector field, $\eta$ is a 1-form and $g$ is a Riemannian metric on $M^{n}$ such that
\begin{eqnarray}
\label{1.1} \phi^2= I-\eta \,o\,\xi,\,\,\,\,\,\,\,\, \eta(\xi)=1,\,\,\,\,\,\,\, \phi(\xi)=0,\,\,\,\,\,\,\,\,\, \eta(\phi X)=0,\\
\label{1.2} g(X, \xi)= \eta(X), \,\,\,\,\,\,\,\, g(\phi X, \phi Y)=g(X,Y)-\eta(X)\eta(Y),
\end{eqnarray}
for all vector fields $X, Y\in T(M^{n})$. If $(\phi, \xi, \eta, g)$ on $M^n$ satisfies the following equations
\begin{eqnarray}
\label{1.3} (\nabla_{X}\phi)Y= - g(X, Y)\xi - \eta(Y)X + 2\eta(X)\eta(Y)\xi,\\
\label{1.4} d\eta=0 \,\,\,\,\,\,and \,\,\,\,\,\, \nabla_{X}\xi=\phi X,
\end{eqnarray}
then $M^{n}$ is called para-Sasakian manifold \cite{ABARMAN}.

In a para-Sasakian manifold, the following relations hold \cite{CihanOzgur}:
\begin{eqnarray}
\label{1.5} (\nabla_{X}\eta)Y= - g(X, Y) + \eta(X)\eta(Y),
\end{eqnarray}
\begin{eqnarray}
\label{1.6} \eta(R(X, Y)Z)= g(X, Z)\eta(Y)-g(Y, Z)\eta(X),\\
\label{1.8} R(X, Y)\xi= \eta(X)Y-\eta(Y)X, \,\,\,\,\, R(\xi, X)Y= \eta(Y)X - g(X, Y)\xi,\\
\label{1.9} S(X, \xi)=-(n-1) \eta(X),\\
\label{1.10} S(\phi X, \phi Y)=  S(X, Y) + (n-1)\eta(X)\eta(Y),
\end{eqnarray}
for every vector fields $X, Y, Z$ on $M^n$.

\par Here we consider a quarter-symmetric metric connection $\tilde{\nabla}$ on a para-Sasakian
manifold \cite{KMandalUCDe} given by
\begin{equation}
\tilde{\nabla}_{X}Y = \nabla_{X}Y + \eta(Y) \phi X - g(\phi X, Y)\xi \label{1.11}.
\end{equation}
\par The relation between curvature tensor $\tilde{R}(X, Y)Z$ of $M^n$ with respect to quarter-symmetric
metric connection $\tilde{\nabla}$ and the curvature tensor $R(X, Y)Z$ with respect to the Levi-Civita connection $\nabla$ is given by
\begin{eqnarray}
\nonumber \tilde{R}(X, Y)Z= R(X, Y)Z + 3g(\phi X, Z)\phi Y - 3g(\phi Y, Z)\phi X \\
+ \{\eta(X)Y-\eta(Y)X \}\eta(Z) -  [g(Y, Z) \eta(X) -\eta(Y) g(X, Z)]\xi  \label{1.12}.
\end{eqnarray}
Also from (\ref{1.12}) we
obtain
\begin{equation}
 \tilde{S}(Y, Z)= S(Y, Z) + 2g(Y, Z) - (n+1)\eta(Y)\eta(Z) - 3trace\phi\, g(\phi Y, Z) \label{1.13},
\end{equation}
where $\tilde{S}$ and $S$ are Ricci tensors of connections $\tilde{\nabla}$ and $\nabla$ respectively.

\section{\bf{Pseudosymmetric para-Sasakian manifold admitting a quarter-symmetric metric connection\label{sec3}}}

A para-Sasakian manifold $M^{n}$ admitting a quarter-symmetric metric connection is said to be pseudosymmetric if
\begin{equation}
 \tilde{R}(X, Y)\cdot \tilde{R}=L_{\tilde{R}}[(X\wedge_{g} Y)\cdot \tilde{R}], \label{3.1}
\end{equation}
holds on the set $U_{\tilde{R}} = \{x \in M^{n} : \tilde{R} \neq 0 \,\,at\,\, x$\}, where $L_{\tilde{R}}$ is some function on $U_{\tilde{R}}$.

Suppose that $M^{n}$ be pseudosymmetric, then in view of (\ref{3.1}) we have
\begin{eqnarray}
\nonumber \tilde{R}(\xi, Y)\tilde{R}(U, V)W - \tilde{R}(\tilde{R}(\xi, Y)U, V)W - \tilde{R}(U, \tilde{R}(\xi, Y)V)W \\
\nonumber - \tilde{R}(U, V)\tilde{R}(\xi, Y)W =L_{\tilde{R}}[(\xi\wedge_{g} Y)\tilde{R}(U, V)W - \tilde{R}((\xi\wedge_{g} Y)U, V)W \\
- \tilde{R}(U, (\xi\wedge_{g} Y)V)W - \tilde{R}(U, V)(\xi\wedge_{g} Y)W]. \label{3.2}
\end{eqnarray}

By virtue of (\ref{1.8}) and (\ref{1.12}), (\ref{3.2}) takes the form
\begin{eqnarray}
\nonumber (L_{\tilde{R}}+2)[ \eta(\tilde{R}(U, V)W)Y- g(Y, \tilde{R}(U, V)W)\xi - \eta(U)\tilde{R}(Y, V)W + g(Y, U)\tilde{R}(\xi, V)W\\
- \eta(V)\tilde{R}(U, Y)W + g(Y, V)\tilde{R}(U, \xi)W- \eta(W)\tilde{R}(U, V)Y + g(Y, W)\tilde{R}(U, V)\xi] = 0. \label{3.3}
\end{eqnarray}

Taking inner product of (\ref{3.3}) with $\xi$ and using (\ref{1.6}) and (\ref{1.12}), we get
\begin{eqnarray}
\nonumber (L_{\tilde{R}}+2)[ g(Y, R(U, V)W) + 3g(\phi U, W)g(\phi V, Y) - 3g(\phi V, W)g(\phi U, Y) \\
\nonumber + \eta(W)\{\eta(U)g(V, Y)-\eta(V)g(U, Y) \} -  \{g(V, W) \eta(U) -\eta(V) g(U, W)\}\eta(Y) \\
+ 2 \{g(V, W) g(Y, U) - g(V, Y) g(U, W)\}]=0.   \label{3.4}
\end{eqnarray}

Assuming that $L_{\tilde{R}}+2\neq 0$, the above equation becomes
\begin{eqnarray}
\nonumber  g(Y, R(U, V)W) + 3g(\phi U, W)g(\phi V, Y) - 3g(\phi V, W)g(\phi U, Y) \\
\nonumber + \eta(W)\{\eta(U)g(V, Y)-\eta(V)g(U, Y) \} -  [g(V, W) \eta(U) -\eta(V) g(U, W)]\eta(Y) \\
+ 2 [g(V, W) g(Y, U) - g((V, Y) g(U, W)] = 0. \label{3.5}
\end{eqnarray}

Putting $V=W=e_{i}$, where $\{e_{i}\}$ is an orthonormal basis of the tangent space at each point of the
manifold and taking summation over $i$, $i = 1, 2, 3,\cdots, n$, we get
\begin{equation}
\tilde{S}(Y, U)= -2(n-1) g(Y, U). \label{3.6}
\end{equation}

Hence, we can state the following:
\begin{thm}\label{thm1}
 If a para-Sasakian manifold $M^n$ admitting a quarter-symmetric metric connection is pseudosymmetric then $M^n$ is an Einstein manifold with respect to quarter-symmetric metric connection or it satisfies $L_{\tilde{R}}=-2$.
\end{thm}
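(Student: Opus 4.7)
The plan is to substitute $X=\xi$ into the pseudosymmetry hypothesis (\ref{3.1}) and extract an Einstein-type equation once a pivotal algebraic identity is established. My first step would be to expand the derivation $\tilde{R}(\xi,Y)\cdot\tilde{R}$ and $(\xi\wedge_{g}Y)\cdot\tilde{R}$ term by term on an arbitrary $(U,V,W)$, producing the four-term expressions one sees in (\ref{3.2}).

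The crucial computation is then to simplify $\tilde{R}(\xi,Y)Z$ using (\ref{1.12}). Substituting $X=\xi$ and invoking $\phi\xi=0$, $\eta(\xi)=1$, together with (\ref{1.8}) for $R(\xi,Y)Z$, the $\phi$-terms vanish and the remaining terms collapse to $\tilde{R}(\xi,Y)Z = 2\eta(Z)Y - 2g(Y,Z)\xi = -2(\xi\wedge_{g}Y)Z$. Thus as endomorphisms $\tilde{R}(\xi,Y) = -2(\xi\wedge_{g}Y)$, so $\tilde{R}(\xi,Y)$ acts as a derivation on $\tilde{R}$ the same way $-2(\xi\wedge_{g}Y)$ does. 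Feeding this back into (\ref{3.2}) yields the unified equation $(L_{\tilde{R}}+2)\,[(\xi\wedge_{g}Y)\cdot\tilde{R}](U,V)W = 0$, which is precisely (\ref{3.3}). At this point the dichotomy $L_{\tilde{R}} = -2$ versus the vanishing bracket separates the two conclusions of the theorem.

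Assuming $L_{\tilde{R}}+2\neq 0$, I would take the inner product of (\ref{3.3}) with $\xi$, using $g(\tilde{R}(A,B)C,\xi)=\eta(\tilde{R}(A,B)C)$ and then applying (\ref{1.12}) together with (\ref{1.6}) to rewrite $\eta\circ\tilde{R}$ purely in terms of $g$, $\eta$, and $\phi$. This produces the expanded identity (\ref{3.5}) involving only the Levi-Civita curvature $R$ and the structure tensors. The remaining step is the orthonormal contraction $V=W=e_{i}$, summed over $i$: the $g(Y,R(U,\cdot)\cdot)$ term delivers $S(Y,U)$, the two $\phi$-terms contract via (\ref{1.1})--(\ref{1.2}) to yield a combination of $g(Y,U)$, $\eta(Y)\eta(U)$, and a term proportional to $\mathrm{trace}\,\phi\cdot g(\phi Y,U)$, while the remaining metric/$\eta$-terms simplify immediately. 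Matching the coefficients with the formula (\ref{1.13}) for $\tilde{S}$ in terms of $S$, everything combines into $\tilde{S}(Y,U)=-2(n-1)g(Y,U)$, which is (\ref{3.6}).

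The main obstacle is the algebraic identity $\tilde{R}(\xi,Y) = -2(\xi\wedge_{g}Y)$: without it one does not obtain a single scalar multiple of $(\xi\wedge_{g}Y)\cdot\tilde{R}$ and the clean factor $(L_{\tilde{R}}+2)$ would not emerge. The subsequent contractions are routine but require care to ensure that the $\phi$-dependent contributions either cancel or can be absorbed via (\ref{1.13}) so that the final expression is proportional to $g(Y,U)$ alone, yielding the Einstein condition for $\tilde{\nabla}$.
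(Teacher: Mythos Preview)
Your proposal is correct and follows essentially the same route as the paper: substitute $X=\xi$ in (\ref{3.1}), use (\ref{1.8}) and (\ref{1.12}) to reduce to (\ref{3.3}), pair with $\xi$ to obtain (\ref{3.5}), and contract to reach (\ref{3.6}). Your explicit identification $\tilde{R}(\xi,Y)=-2(\xi\wedge_{g}Y)$ is exactly what underlies the paper's passage from (\ref{3.2}) to (\ref{3.3}), just stated more transparently.
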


\section{\bf{Ricci-pseudosymmetric para-Sasakian manifold admitting a quarter-symmetric metric connection\label{sec4}}}
A para-Sasakian manifold $M^n$ admitting a quarter-symmetric metric connection is said to be Ricci-pseudosymmetric if the following condition is satisfied
\begin{equation}
\tilde{R} \cdot \tilde{S}= L_{\tilde{S}}[ (X\wedge_{g} Y)\cdot \tilde{S}],    \label{4.1}
\end{equation}
on $U_{\tilde{S}}$.
\par Let para-Sasakian manifold $M^{n}$ admitting a quarter-symmetric metric connection be Ricci-pseudosymmetric. Then we have
\begin{equation}
\tilde{S}(\tilde{R}(X, Y)Z, W) + \tilde{S}(Z, \tilde{R}(X, Y)W) = L_{\tilde{S}}[ \tilde{S}((X\wedge_{g} Y)Z, W) + \tilde{S}(Z, (X\wedge_{g} Y)W) ].    \label{4.2}
\end{equation}

By taking $Y=W=\xi$ and making use of (\ref{1.8}), (\ref{1.9}) and (\ref{1.12}), the above equation turns into
\begin{equation}
 (L_{\tilde{S}}+2)[ \tilde{S}(X, Z) + 2(n-1) g(X, Z) ]=0   \label{4.3}
\end{equation}

Thus, we have the following assertion:
\begin{thm}\label{thm2}
 If a para-Sasakian manifold $M^n$ admitting a quarter-symmetric metric connection is Ricci pseudosymmetric then $M^n$ is an Einstein manifold with respect to quarter-symmetric metric connection or it satisfies $L_{\tilde{S}}=-2$.
\end{thm}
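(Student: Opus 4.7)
The plan is to start from the defining relation (\ref{4.2}) of Ricci-pseudosymmetry, specialize by choosing $Y = W = \xi$, and then use the identities for the quarter-symmetric curvature and Ricci tensors evaluated on $\xi$ to collapse both sides into a single scalar identity in $\tilde{S}(X,Z)$ and $g(X,Z)$.

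First I would prepare the $\xi$-slot identities. Combining (\ref{1.8}) and (\ref{1.12}) with $\phi\xi=0$, $\eta(\xi)=1$ and $g(X,\xi)=\eta(X)$, I expect the quarter-symmetric curvature with $\xi$ in the second argument to simplify to
\begin{equation*}
\tilde{R}(X,\xi)Z = 2g(X,Z)\xi - 2\eta(Z)X,
\end{equation*}
and in particular $\tilde{R}(X,\xi)\xi = 2\eta(X)\xi - 2X$. Next, using (\ref{1.9}), (\ref{1.13}) and the fact that $g(\phi Y,\xi)=\eta(\phi Y)=0$, I expect
\begin{equation*}
\tilde{S}(Y,\xi) = -2(n-1)\eta(Y).
\end{equation*}

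With these in hand I would substitute $Y = W = \xi$ into (\ref{4.2}). On the left-hand side, the term $\tilde{S}(\tilde{R}(X,\xi)Z,\xi)$ reduces via the $\xi$-slot Ricci formula to $-2(n-1)[2g(X,Z)-2\eta(X)\eta(Z)]$, while $\tilde{S}(Z,\tilde{R}(X,\xi)\xi) = 2\eta(X)\tilde{S}(Z,\xi) - 2\tilde{S}(Z,X)$ contributes $-4(n-1)\eta(X)\eta(Z) - 2\tilde{S}(Z,X)$. On the right-hand side, using $(X\wedge_g \xi)Z = \eta(Z)X - g(X,Z)\xi$ and $(X\wedge_g \xi)\xi = X - \eta(X)\xi$, the two summands produce $2(n-1)g(X,Z) + \tilde{S}(Z,X)$ after the cross terms cancel. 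Collecting everything, the $\eta(X)\eta(Z)$ contributions cancel and the equation rearranges into (\ref{4.3}), namely
\begin{equation*}
(L_{\tilde{S}}+2)\bigl[\tilde{S}(X,Z) + 2(n-1)g(X,Z)\bigr] = 0.
\end{equation*}
The stated dichotomy then reads off the factorization: either $L_{\tilde{S}} \equiv -2$ on $U_{\tilde{S}}$, or $\tilde{S}(X,Z) = -2(n-1)g(X,Z)$, which is precisely the Einstein condition with respect to $\tilde{\nabla}$.

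The only real obstacle is bookkeeping: one must be careful with the sign conventions for the derivation action of $\tilde{R}(X,Y)$ on the $(0,2)$-tensor $\tilde{S}$ and for the endomorphism $X\wedge_g Y$, since a sign slip would destroy the cancellation of the $\eta(X)\eta(Z)$ terms and prevent the clean factorization of $L_{\tilde{S}}+2$. Beyond that, the argument is a purely mechanical substitution and mirrors the structure of the proof of Theorem~\ref{thm1}, with the Ricci tensor playing the role previously played by the full curvature.
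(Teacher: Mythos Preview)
Your proposal is correct and follows exactly the same route as the paper: specialize (\ref{4.2}) by setting $Y=W=\xi$, use the $\xi$-slot identities coming from (\ref{1.8}), (\ref{1.9}), (\ref{1.12}) and (\ref{1.13}) (your formulas $\tilde{R}(X,\xi)Z=2g(X,Z)\xi-2\eta(Z)X$ and $\tilde{S}(Y,\xi)=-2(n-1)\eta(Y)$ are exactly what the paper is invoking), and arrive at (\ref{4.3}). Your write-up actually supplies more of the intermediate bookkeeping than the paper does, but the argument is identical.
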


\section{\bf{Projectively flat para-Sasakian manifold admitting a quarter-symmetric metric connection\label{sec5}}}
The projective curvature tensor on a Riemannian manifold is defined by
\begin{equation}
 P(X, Y)Z=R(X, Y)Z - \frac{1}{(n-1)}[S(Y, Z)X - S(X, Z)Y].   \label{5.1}
\end{equation}

For an $n$-dimensional para-Sasakian manifold $M^{n}$ admitting a quarter-symmetric metric connection, the projective curvature tensor is given by
\begin{equation}
 \tilde{P}(X, Y)Z= \tilde{R}(X, Y)Z - \frac{1}{(n-1)}[ \tilde{S}(Y, Z)X -  \tilde{S}(X, Z)Y].   \label{5.2}
\end{equation}

\begin{thm}\label{thm3}
A projectively flat para-Sasakian manifold $M^n$ admitting a quarter-symmetric metric connection is an Einstein manifold with respect to quarter-symmetric metric connection.
\end{thm}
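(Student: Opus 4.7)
The plan is to use the hypothesis $\tilde{P}=0$, which by (\ref{5.2}) means
\[
\tilde{R}(X,Y)Z=\frac{1}{n-1}\bigl[\tilde{S}(Y,Z)X-\tilde{S}(X,Z)Y\bigr],
\]
and then to specialize this identity at $X=\xi$ so that all terms on the left collapse via the explicit para-Sasakian relations in Section \ref{sec2}. Concretely, I would first compute $\tilde{R}(\xi,Y)Z$ in closed form by plugging $X=\xi$ into (\ref{1.12}) and using $\phi\xi=0$, $\eta(\xi)=1$, $g(\xi,Z)=\eta(Z)$, together with the second identity of (\ref{1.8}), namely $R(\xi,Y)Z=\eta(Z)Y-g(Y,Z)\xi$. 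The cross terms $3g(\phi\xi,Z)\phi Y$ and $3g(\phi Y,Z)\phi\xi$ vanish, and a short simplification yields
\[
\tilde{R}(\xi,Y)Z=2\bigl[\eta(Z)Y-g(Y,Z)\xi\bigr].
\]

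Next I would evaluate $\tilde{S}(\xi,Z)$ from (\ref{1.13}) with $Y=\xi$, using $S(\xi,Z)=-(n-1)\eta(Z)$ from (\ref{1.9}) and $g(\phi\xi,Z)=0$. This collapses to
\[
\tilde{S}(\xi,Z)=-2(n-1)\eta(Z).
\]
Substituting both evaluations into the projectively flat identity with $X=\xi$ gives
\[
2\bigl[\eta(Z)Y-g(Y,Z)\xi\bigr]=\frac{1}{n-1}\tilde{S}(Y,Z)\xi+2\eta(Z)Y,
\]
so that the $Y$-terms cancel and we are left with $\tilde{S}(Y,Z)\xi=-2(n-1)g(Y,Z)\xi$. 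Taking the inner product with $\xi$ and using $\eta(\xi)=1$ then yields $\tilde{S}(Y,Z)=-2(n-1)g(Y,Z)$, which is the Einstein condition with respect to $\tilde{\nabla}$.

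There is essentially no serious obstacle here — the argument is mechanical once the specialization $X=\xi$ is chosen, and the only step requiring any care is the bookkeeping in the expansion of $\tilde{R}(\xi,Y)Z$ via (\ref{1.12}), where several terms involving $\eta(Y)\eta(Z)\xi$ cancel pairwise. An alternative route would be to contract the projectively flat identity over $X$ (yielding $\tilde{R}\!\cdot$ a Ricci-type relation), but the $\xi$-specialization above is faster and avoids tracing the $3\operatorname{trace}\phi$ term in (\ref{1.13}).
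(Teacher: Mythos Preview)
Your proof is correct and follows essentially the same approach as the paper: both arguments specialize the projectively flat identity at $\xi$ and reduce to $\tilde{S}(Y,Z)=-2(n-1)g(Y,Z)$ using (\ref{1.8}), (\ref{1.9}), (\ref{1.12}), (\ref{1.13}). The only cosmetic difference is the order of operations---the paper first takes the inner product with a vector $W$ and then sets $X=W=\xi$ in (\ref{5.3}), whereas you set $X=\xi$ at the vector level and contract with $\xi$ only at the end; the computations are the same.
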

\begin{proof}
Consider a projectively flat para-Sasakian manifold admitting a quarter-symmetric metric connection. Then from (\ref{5.2}) we have
\begin{equation}
 g(\tilde{R}(X, Y)Z, W) = \frac{1}{(n-1)}[ \tilde{S}(Y, Z)g(X, W) -  \tilde{S}(X, Z)g(Y, W)].   \label{5.3}
\end{equation}

Setting $X=W=\xi$ in (\ref{5.3}) and using (\ref{1.8}), (\ref{1.9}), (\ref{1.12}) and (\ref{1.13}), we get
\begin{equation}
 \tilde{S}(X, Z) =- 2(n-1) g(X, Z).    \label{5.4}
\end{equation}
Hence the proof.
\end{proof}

\section{\bf{Projectively pseudosymmetric para-Sasakian manifold admitting a quarter-symmetric metric connection\label{sec6}}}
A para-Sasakian manifold admitting a quarter-symmetric metric connection is said to be projectively pseudosymmetric if
\begin{equation}
 \tilde{R}(X, Y)\cdot \tilde{P}=L_{\tilde{P}}[(X\wedge_{g} Y)\cdot \tilde{P}], \label{6.1}
\end{equation}
holds on the set $U_{\tilde{P}} = \{x \in M^{n} : \tilde{P} \neq 0\}$ at $x$, where $L_{\tilde{P}}$ is some function on $U_{\tilde{P}}$.

Let $M^n$ be projectively pseudosymmetric, then we have
\begin{eqnarray}
\nonumber \tilde{R}(X, \xi)\tilde{P}(U, V)\xi - \tilde{P}(\tilde{R}(X, \xi)U, V)\xi - \tilde{P}(U, \tilde{R}(X, \xi)V)\xi \\
\nonumber - \tilde{P}(U, V)\tilde{R}(X, \xi)\xi =L_{\tilde{P}}[(X\wedge_{g} \xi)\tilde{P}(U, V)\xi - \tilde{P}((X\wedge_{g} \xi)U, V)\xi \\
- \tilde{P}(U, (X\wedge_{g} \xi)V)\xi - \tilde{P}(U, V)(X\wedge_{g} \xi)\xi]. \label{6.2}
\end{eqnarray}

By virtue of (\ref{1.12}), (\ref{1.13}) and (\ref{5.2}), (\ref{6.2}) becomes
\begin{equation}
 (L_{\tilde{P}}+2)\tilde{P}(U, V)X=0. \label{6.3}
\end{equation}

So, one can state that:
\begin{thm}\label{thm4}
If a para-Sasakian manifold $M^n$ admitting a quarter-symmetric metric connection is projectively pseudosymmetric then $M^n$ is projectively flat with respect to quarter-symmetric metric connection or $L_{\tilde{P}}=-2$.
\end{thm}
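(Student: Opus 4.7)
The plan is to derive the relation (6.3), $(L_{\tilde{P}}+2)\tilde{P}(U,V)X=0$, from the defining identity (6.1) by specialising at a judicious triple of vector fields, and then to read the stated dichotomy off directly. First I would set $Y=\xi$ in (6.1) and evaluate both sides on the triple $(U,V,\xi)$, expanding the two derivation actions $\tilde{R}(X,\xi)\cdot\tilde{P}$ and $(X\wedge_g\xi)\cdot\tilde{P}$ into the eight-term identity (6.2).

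The engine of the argument is the pointwise identity
\[
\tilde{R}(X,\xi)Y \;=\; 2\{g(X,Y)\xi-\eta(Y)X\} \;=\; -2\,(X\wedge_g\xi)Y,
\]
which I would verify by substituting into (1.12) using $\phi\xi=0$, $\eta(\xi)=1$, $g(\xi,Y)=\eta(Y)$, together with the Levi-Civita formula $R(X,\xi)Y=g(X,Y)\xi-\eta(Y)X$ read off from (1.8). Because $\tilde{R}(X,\xi)$ and $X\wedge_g\xi$ act as endomorphisms, this proportionality transfers termwise to their derivation actions on the $(1,3)$-tensor $\tilde{P}$, so the left-hand side of (6.2) becomes $-2$ times exactly the expression that appears on the right with $L_{\tilde{P}}$. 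Rearranging yields
\[
(L_{\tilde{P}}+2)\,\bigl[(X\wedge_g\xi)\cdot\tilde{P}\bigr](U,V)\xi \;=\; 0.
\]

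To close out, I would show that $\tilde{P}(U,V)\xi=0$ on any para-Sasakian manifold equipped with $\tilde\nabla$. Substituting $Z=\xi$ in (5.2), formula (1.12) combined with (1.8) gives $\tilde{R}(U,V)\xi=2\{\eta(U)V-\eta(V)U\}$, while (1.13) together with (1.9) and $g(\phi V,\xi)=0$ gives $\tilde{S}(V,\xi)=-2(n-1)\eta(V)$; the curvature and Ricci contributions cancel exactly. Feeding this back into the four-term expansion of $[(X\wedge_g\xi)\cdot\tilde{P}](U,V)\xi$ kills the three terms involving $\tilde{P}(\cdot,\cdot)\xi$, and the surviving piece reduces to $-\tilde{P}(U,V)(X\wedge_g\xi)\xi=-\tilde{P}(U,V)X+\eta(X)\tilde{P}(U,V)\xi=-\tilde{P}(U,V)X$. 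This yields (6.3), from which the alternative $\tilde{P}\equiv 0$ versus $L_{\tilde{P}}=-2$ is immediate. I expect the main obstacle to be the clean verification of $\tilde{P}(U,V)\xi=0$: without the exact cancellation of the $\tilde{R}$ and $\tilde{S}$ contributions when fed $\xi$, the reduction to (6.3) would leave non-proportional residual terms and no dichotomy could be drawn.
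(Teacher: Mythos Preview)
Your proposal is correct and follows essentially the same route as the paper: specialise (6.1) at $Y=\xi$ and the last argument $\xi$ to obtain (6.2), then use (1.12), (1.13) and (5.2) to collapse it to $(L_{\tilde P}+2)\tilde P(U,V)X=0$. The paper compresses this into the single line ``By virtue of (1.12), (1.13) and (5.2), (6.2) becomes (6.3)''; your two explicit observations $\tilde R(X,\xi)=-2(X\wedge_g\xi)$ and $\tilde P(U,V)\xi=0$ are precisely the content of that line, so you have supplied the details rather than taken a different path.
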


In view of theorem \ref{thm3}, one can state the above theorem as
\begin{thm}\label{thm4}
 If a para-Sasakian manifold $M^n$ admitting a quarter-symmetric metric connection is projectively pseudosymmetric then $M^n$ is an Einstein manifold with respect to quarter-symmetric metric connection or $L_{\tilde{P}}=-2$.
\end{thm}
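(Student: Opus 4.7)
The strategy is to first reduce the projective pseudosymmetry hypothesis to the algebraic identity (\ref{6.3}), which yields the dichotomy ``$\tilde{P}\equiv 0$ or $L_{\tilde{P}}=-2$'', and then feed the projectively-flat alternative into Theorem~\ref{thm3} to convert it into the Einstein condition $\tilde{S}=-2(n-1)g$. So all the substantive work lies in deriving (\ref{6.3}) from the specialization (\ref{6.2}) of (\ref{6.1}).

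The reduction rests on two preparatory identities. First, using (\ref{1.12}) with $Y=\xi$ together with $\phi\xi=0$, $\eta(\xi)=1$, $g(\xi,Z)=\eta(Z)$, and the formula $R(X,\xi)Z=g(X,Z)\xi-\eta(Z)X$ that follows from (\ref{1.8}), the $\phi$-terms drop out and the remaining terms collapse to
\[
\tilde{R}(X,\xi)Z=-2\bigl[\eta(Z)X-g(X,Z)\xi\bigr]=-2(X\wedge_{g}\xi)Z,
\]
so $\tilde{R}(X,\xi)=-2(X\wedge_{g}\xi)$ as an operator on vector fields. Second, specializing $Z=\xi$ in (\ref{1.12}) gives $\tilde{R}(U,V)\xi=2[\eta(U)V-\eta(V)U]$, and evaluating (\ref{1.13}) at $\xi$ using (\ref{1.9}) gives $\tilde{S}(V,\xi)=-2(n-1)\eta(V)$. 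Substituting both into (\ref{5.2}) causes the two pairs of $\eta(U)V-\eta(V)U$ terms to cancel exactly, so $\tilde{P}(U,V)\xi=0$.

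With these in hand, the eight tensor terms in (\ref{6.2}) simplify at once. The terms $(X\wedge_{g}\xi)\tilde{P}(U,V)\xi$ and $\tilde{R}(X,\xi)\tilde{P}(U,V)\xi$ vanish outright, and after expanding $(X\wedge_{g}\xi)U=\eta(U)X-g(X,U)\xi$, the four middle terms on the two sides reduce to linear combinations of $\tilde{P}(*,V)\xi$ and $\tilde{P}(U,*)\xi$, which all vanish for the same reason. Only the trailing terms survive: $(X\wedge_{g}\xi)\xi=X-\eta(X)\xi$ gives $\tilde{P}(U,V)(X\wedge_{g}\xi)\xi=\tilde{P}(U,V)X$, while the operator identity above gives $\tilde{P}(U,V)\tilde{R}(X,\xi)\xi=-2\tilde{P}(U,V)X$, again using $\tilde{P}(U,V)\xi=0$. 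Plugging back into (\ref{6.2}) produces exactly $(L_{\tilde{P}}+2)\tilde{P}(U,V)X=0$, which is (\ref{6.3}).

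The conclusion is then immediate: on $U_{\tilde{P}}$, either $L_{\tilde{P}}=-2$ or $\tilde{P}\equiv 0$; in the latter case Theorem~\ref{thm3} upgrades projective flatness to the Einstein condition $\tilde{S}=-2(n-1)g$ with respect to $\tilde{\nabla}$, finishing the proof. The only real obstacle is the careful sign-bookkeeping in the eight-term expansion of (\ref{6.2}); once the operator identity $\tilde{R}(X,\xi)=-2(X\wedge_{g}\xi)$ and the vanishing $\tilde{P}(U,V)\xi=0$ are established, no deeper computation is required.
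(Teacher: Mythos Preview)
Your argument is correct and follows exactly the route the paper takes: specialize (\ref{6.1}) to (\ref{6.2}), reduce to (\ref{6.3}) via the curvature identities for $\tilde{R}$ and $\tilde{P}$ in the $\xi$-direction, and then invoke Theorem~\ref{thm3}. The paper compresses the passage from (\ref{6.2}) to (\ref{6.3}) into the single phrase ``By virtue of (\ref{1.12}), (\ref{1.13}) and (\ref{5.2})''; your explicit operator identity $\tilde{R}(X,\xi)=-2(X\wedge_{g}\xi)$ and the vanishing $\tilde{P}(U,V)\xi=0$ are precisely the hidden ingredients behind that sentence, and your term-by-term check of (\ref{6.2}) is accurate.
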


\section{\bf Example \label{sec7}}
\subsection{\bf Example}
\par We consider a 3-dimensional manifold $M = \{(x, y, z)\in \mathbb{R}^3: z\neq 0\}$, where $(x, y, z)$
are standard coordinates in $\mathbb{R}^3$. Let $\{E_{1}, E_{2}, E_{3}\}$ be a linearly independent global frame field on $M$ given by
\begin{equation}
\nonumber E_{1}= e^{z}\frac{\partial}{\partial{y}}, \,\,\,\,\,\,\,\,\,\,\, E_{2}= e^{z}(\frac{\partial}{\partial{y}}-\frac{\partial}{\partial{x}}), \,\,\,\,\,\,\,\,\,\, E_{3}= \frac{\partial}{\partial{z}},
\end{equation}

\par If $g$ is a Riemannian metric defined by
\[
    g(E_{i}, E_{j})=
\begin{cases}
    1, & i=j\\
    0, & i\neq j
\end{cases}
\]
for $1\leq i, j\leq 3,$ and if $\eta$ is the 1-form defined by $\eta(Z)=g(Z, E_{3})$ for any vector field $Z\in \chi(M)$. We define the (1, 1)-tensor field $\phi$ as
\begin{equation}
\nonumber \phi (E_{1})= E_{1}, \,\,\,\,\,\, \phi (E_{2})= -E_{2}, \,\,\,\,\, \phi (E_{3})= 0.
\end{equation}

The linearity property of $\phi$ and $g$ yields that
\begin{eqnarray}
\nonumber & & \eta(E_{3})=1, \\
\nonumber & & \phi^{2}U=U - \eta(U)E_{3}, \\
\nonumber & & g(\phi U, \phi V)=g(U, V)-\eta(U) \eta(V),
\end{eqnarray}
for any $U, V \in \chi(M)$.

\par Now we have
\begin{equation}
\nonumber [E_{1}, E_{2}]=0, \,\,\,\,\,\, [E_{1}, E_{3}]= E_{1}, \,\,\,\,\, [E_{2}, E_{3}]= E_{2}.
\end{equation}

The Riemannian connection $\nabla$ of the metric $g$ known as Koszul's formula and is given by
\begin{eqnarray}
\nonumber 2g(\nabla_{X}Y, Z) = Xg(Y, Z)+Yg(Z, X)-Zg(X, Y)-g(X, [Y, Z]) \\
\nonumber - g(Y, [X, Z]) + g(Z, [X, Y]).
\end{eqnarray}
Using Koszul's formula we get the followings in matrix form
\begin{equation}
\nonumber \left(
  \begin{array}{ccc}
   \nabla_{E_{1}}E_{1} &\nabla_{E_{1}}E_{2} &\nabla_{E_{1}}E_{3} \\
   \nabla_{E_{2}}E_{1} &\nabla_{E_{2}}E_{2} &\nabla_{E_{2}}E_{3}  \\
   \nabla_{E_{3}}E_{1} &\nabla_{E_{3}}E_{2} &\nabla_{E_{3}}E_{3} \\
  \end{array}
\right) =\left(
           \begin{array}{ccc}
             - E_{3} & 0 & E_{1} \\
             0 & -E_{3} & E_{2}  \\
              0 & 0 & 0 \\
           \end{array}
         \right).
\end{equation}

\par Clearly $(\phi, \xi, \eta, g)$ is a para-Sasakian structure on $M$. Thus $M(\phi, \xi, \eta, g)$ is a 3-dimensional para-Sasakian manifold.

Using (\ref{1.11}) and the above equation, one can easily obtain the following:
\begin{equation}
\nonumber \left(
  \begin{array}{ccc}
    \tilde{\nabla}_{E_{1}}E_{1} & \tilde{\nabla}_{E_{1}}E_{2} & \tilde{\nabla}_{E_{1}}E_{3} \\
    \tilde{\nabla}_{E_{2}}E_{1} & \tilde{\nabla}_{E_{2}}E_{2} & \tilde{\nabla}_{E_{2}}E_{3}  \\
    \tilde{\nabla}_{E_{3}}E_{1} & \tilde{\nabla}_{E_{3}}E_{2} & \tilde{\nabla}_{E_{3}}E_{3} \\
  \end{array}
\right) =\left(
           \begin{array}{ccc}
             - 2E_{3} & 0 & 2E_{1} \\
             0 & -2E_{3} & 2E_{2}  \\
              0 & 0 & 0 \\
           \end{array}
         \right).
\end{equation}

\par With the help of the above results it can be easily verified that
\begin{eqnarray}
\nonumber & & R(E_{1}, E_{2})E_{3} = 0, \,\,\,\,\,\,\,\,\,\,\,\,\,\,\,\,\,\,\,\,\,\, R(E_{2}, E_{3})E_{3}=-E_{2} , \,\,\,\,\,\,\,\,\,\,\,\,\,\,\,\, R(E_{1}, E_{3})E_{3}=-E_{1} ,\\
\nonumber & & R(E_{1}, E_{2})E_{2}=-E_{1}, \,\,\,\,\,\,\,\,\,\,\,\,\, R(E_{2}, E_{3})E_{2}=E_{3} , \,\,\,\,\,\,\,\,\,\,\,\,\,\,\,\,\,\,\,\,\,\, R(E_{1}, E_{3})E_{2} =0 ,\\
\nonumber & & R(E_{1}, E_{2})E_{1} =E_{2}, \,\,\,\,\,\,\,\,\,\,\,\,\,\,\,\,\, R(E_{2}, E_{3})E_{1} =0 , \,\,\,\,\,\,\,\,\,\,\,\,\,\,\,\,\,\,\,\,\,\,\,\,\,\,\, R(E_{1}, E_{3})E_{1} =E_{3}.
\end{eqnarray}
 and
 \begin{eqnarray}
\nonumber & & \tilde{R}(E_{1}, E_{2})E_{3} = 0, \,\,\,\,\,\,\,\,\,\,\,\,\,\,\,\,\,\,\,\,\,\,\,\, \tilde{R}(E_{2}, E_{3})E_{3}=-2E_{2} , \,\,\,\,\,\,\,\,\,\,\,\,\,\,\,\,\,\, \tilde{R}(E_{1}, E_{3})E_{3}=-2E_{1}, \\
\nonumber & & \tilde{R}(E_{1}, E_{2})E_{2}=-4E_{1}, \,\,\,\,\,\,\,\,\,\,\,\, \tilde{R}(E_{2}, E_{3})E_{2}=2E_{3} , \,\,\,\,\,\,\,\,\,\,\,\,\,\,\,\,\,\,\,\,\, \tilde{R}(E_{1}, E_{3})E_{2} =0,\\
 & & \tilde{R}(E_{1}, E_{2})E_{1} =4E_{2}, \,\,\,\,\,\,\,\,\,\,\,\,\,\,\,\,\, \tilde{R}(E_{2}, E_{3})E_{1} =0 , \,\,\,\,\,\,\,\,\,\,\,\,\,\,\,\,\,\,\,\,\,\,\,\,\,\,\, \tilde{R}(E_{1}, E_{3})E_{1} =2E_{3}.\label{7.1}
\end{eqnarray}

\par Since ${E_{1}, E_{2}, E_{3}}$ forms a basis, any vector field $X, Y, Z \in \chi(M)$ can be
written as $X = a_{1}E_{1} + b_{1}E_{2} + c_{1}E_{3}$, $Y = a_{2}E_{1} + b_{2}E_{2} + c_{2}E_{3}$, $Z= a_{3}E_{1} + b_{3}E_{2} + c_{3}E_{3}$, where $a_{i}, b_{i}, c_{i} \in \mathbb{R}^{+}$ (the set of all positive real numbers), $i = 1, 2, 3$. Using the expressions of the curvature tensors, we find values of Riemannian curvature and Ricci curvature with respect to quarter-symmetric metric connection as;
\begin{eqnarray}
\nonumber \tilde{R}(X, Y)Z &=& [-4\{a_{1}b_{2}-b_{1}a_{2}\} b_{3}  + 2 \{c_{1}a_{2}-a_{1}c_{2}\} c_{3}] E_{1}\\
\nonumber &+& [-4\{b_{1}a_{2}-a_{1}b_{2}\} a_{3} + 2\{c_{1}b_{2}-b_{1}c_{2}\} c_{3}] E_{2}\\
 &+& [ -2\{c_{1}a_{2}-a_{1}c_{2}\} a_{3} - 2\{c_{1}b_{2}-b_{1}c_{2}\} b_{3}] E_{3},  \label{7.2}\\
\tilde{S}(E_{1}, E_{1}) &=& \tilde{S}(E_{2}, E_{2})=-6, \,\, \tilde{S}(E_{3}, E_{3})=-4.\label{7.3}
\end{eqnarray}

Using (\ref{7.1}), (\ref{7.3}) and the expression of the endomorphism $(X\wedge_{g} Y)Z=g(Y, Z)X-g(X, Z)Y$, one can easily verify that
\begin{equation}
\tilde{S}(\tilde{R}(X, E_{3})Y, E_{3}) + \tilde{S}(Y, \tilde{R}(X, E_{3})E_{3}) = -2[ \tilde{S}((X\wedge_{g} E_{3})Y, E_{3}) + \tilde{S}(Y, (X\wedge_{g} E_{3})E_{3}) ],     \label{7.4}
\end{equation}
here $L_{\tilde{S}}=-2$. Thus, the above equation verify one part of the theorem \ref{thm2} of section \ref{sec4}

Moreover, the manifold under consideration satisfies
\begin{eqnarray}
\nonumber \tilde{R}(X, Y)Z=-\tilde{R}(Y, X)Z,\\
\nonumber \tilde{R}(X, Y)Z+\tilde{R}(Y, Z)X+\tilde{R}(Z, X)Y=0.
\end{eqnarray}
Hence, from the above equations one can say that this example verifies the condition $c$ of theorem 3.1 in \cite{AKMUCD} and first Bianchi identity.

\subsection{\bf Example}

\par We consider a 5-dimensional manifold $M = \{(x_{1}, x_{2}, x_{3}, x_{4}, x_{5})\in \mathbb{R}^5\}$, where $(x_{1}, x_{2}, x_{3}, x_{4}, x_{5})$
are standard coordinates in $\mathbb{R}^5$. We choose the vector fields
\begin{equation}
\nonumber E_{1}= \frac{\partial}{\partial{x_{1}}}, \,\,\,\, E_{2}= \frac{\partial}{\partial{x_{2}}}, \,\,\,\, E_{3}= \frac{\partial}{\partial{x_{3}}}, \,\,\,\, E_{4}= \frac{\partial}{\partial{x_{4}}}, \,\,\,\, E_{5}=x_{1}\frac{\partial}{\partial{x_{1}}}+ x_{2}\frac{\partial}{\partial{x_{2}}} + x_{3}\frac{\partial}{\partial{x_{3}}} + x_{4}\frac{\partial}{\partial{x_{4}}} + \frac{\partial}{\partial{x_{5}}},
\end{equation}
which are linearly independent at each point of $M$

\par Let $g$ be a Riemannian metric defined by
\[
    g(E_{i}, E_{j})=
\begin{cases}
    1, & i=j\\
    0, & i\neq j
\end{cases}
\]
for $1\leq i, j\leq 5,$ and if $\eta$ is the 1-form defined by $\eta(Z)=g(Z, E_{5})$ for any vector field $Z\in \chi(M)$. Let $\phi$ be the (1, 1)-tensor field defined by
\begin{equation}
\nonumber \phi (E_{1})= E_{1}, \,\,\,\,\, \phi (E_{2})= E_{2}, \,\,\,\,\, \phi (E_{3})= E_{3}, \,\,\,\,\, \phi (E_{4})= E_{4}, \,\,\,\,\, \phi (E_{5})= 0.
\end{equation}

The linearity property of $\phi$ and $g$ yields that
\begin{eqnarray}
\nonumber & & \eta(E_{5})=1, \\
\nonumber & & \phi^{2}U=U - \eta(U)E_{5}, \\
\nonumber & & g(\phi U, \phi V)=g(U, V)-\eta(U) \eta(V),
\end{eqnarray}
for any $U, V \in \chi(M)$.

\par Now we have
\begin{eqnarray*}
 && [E_{1}, E_{2}]=0, \,\,\,\,\,\,\,\, [E_{1}, E_{3}]=0, \,\,\,\,\,\,\,\, [E_{1}, E_{4}]=0, \,\,\,\,\,\,\,\, [E_{1}, E_{5}]=E_{1},\\
 && [E_{2}, E_{3}]= 0, \,\,\,\,\,\,\,\, [E_{2}, E_{4}]= 0, \,\,\,\,\,\,\,\, [E_{2}, E_{5}]= E_{2},\\
 && [E_{3}, E_{4}]=0, \,\,\,\,\,\,\,\, [E_{3}, E_{5}]=E{3}, \,\,\,\,\,\,\,\, [E_{4}, E_{5}]=E_{4}.
\end{eqnarray*}

By virtue of Koszul's formula we get the followings in matrix form
\begin{equation}
\nonumber \left(
  \begin{array}{ccccc}
   \nabla_{E_{1}}E_{1} &\nabla_{E_{1}}E_{2} &\nabla_{E_{1}}E_{3} &\nabla_{E_{1}}E_{4} &\nabla_{E_{1}}E_{5}\\
   \nabla_{E_{2}}E_{1} &\nabla_{E_{2}}E_{2} &\nabla_{E_{2}}E_{3} &\nabla_{E_{2}}E_{4} &\nabla_{E_{2}}E_{5} \\
   \nabla_{E_{3}}E_{1} &\nabla_{E_{3}}E_{2} &\nabla_{E_{3}}E_{3} &\nabla_{E_{3}}E_{4} &\nabla_{E_{3}}E_{5} \\
  \nabla_{E_{4}}E_{1} &\nabla_{E_{4}}E_{2} &\nabla_{E_{4}}E_{3} &\nabla_{E_{4}}E_{4} &\nabla_{E_{4}}E_{5} \\
   \nabla_{E_{5}}E_{1} &\nabla_{E_{5}}E_{2} &\nabla_{E_{5}}E_{3} &\nabla_{E_{5}}E_{4} &\nabla_{E_{5}}E_{5} \\
  \end{array}
\right) =\left(
           \begin{array}{ccccc}
             - E_{5} & 0 & 0 & 0 & E_{1} \\
             0 & -E_{5} & 0 & 0 & E_{2}  \\
             0 & 0 & -E_{5} & 0 & E_{3} \\
              0 & 0 & 0 & -E_{5} & E_{4} \\
              0 & 0 & 0 & 0 & 0   \\
           \end{array}
         \right).
\end{equation}

\par Above expressions satisfies all the properties of para-Sasakian manifold. Thus $M(\phi, \xi, \eta, g)$ is a 5-dimensional para-Sasakian manifold.

From the above expressions and the relation of quarter symmetric metric connection and Riemannian connection, one can easily obtain the following:
\begin{equation}
\nonumber \left(
  \begin{array}{ccccc}
   \tilde{\nabla}_{E_{1}}E_{1} &\tilde{\nabla}_{E_{1}}E_{2} &\tilde{\nabla}_{E_{1}}E_{3} &\tilde{\nabla}_{E_{1}}E_{4} &\tilde{\nabla}_{E_{1}}E_{5}\\
   \tilde{\nabla}_{E_{2}}E_{1} &\tilde{\nabla}_{E_{2}}E_{2} &\tilde{\nabla}_{E_{2}}E_{3} &\tilde{\nabla}_{E_{2}}E_{4} &\tilde{\nabla}_{E_{2}}E_{5} \\
   \tilde{\nabla}_{E_{3}}E_{1} &\tilde{\nabla}_{E_{3}}E_{2} &\tilde{\nabla}_{E_{3}}E_{3} &\tilde{\nabla}_{E_{3}}E_{4} &\tilde{\nabla}_{E_{3}}E_{5} \\
  \tilde{\nabla}_{E_{4}}E_{1} &\tilde{\nabla}_{E_{4}}E_{2} &\tilde{\nabla}_{E_{4}}E_{3} &\tilde{\nabla}_{E_{4}}E_{4} &\tilde{\nabla}_{E_{4}}E_{5} \\
   \tilde{\nabla}_{E_{5}}E_{1} &\tilde{\nabla}_{E_{5}}E_{2} &\tilde{\nabla}_{E_{5}}E_{3} &\tilde{\nabla}_{E_{5}}E_{4} &\tilde{\nabla}_{E_{5}}E_{5} \\
  \end{array}
\right) =\left(
           \begin{array}{ccccc}
             - 2E_{5} & 0 & 0 & 0 & 2E_{1} \\
             0 & -2E_{5} & 0 & 0 & 2E_{2}  \\
             0 & 0 & -2E_{5} & 0 & 2E_{3} \\
              0 & 0 & 0 & -2E_{5} & 2E_{4} \\
              0 & 0 & 0 & 0 & 0   \\
           \end{array}
         \right).
\end{equation}

\par With the help of the above results it can be easily obtain the non-zero components of curvature tensors as
\begin{eqnarray}
\nonumber & & R(E_{1}, E_{2})E_{1} = E_{2}, \,\,\,\,\, R(E_{1}, E_{2})E_{2} = -E_{1}, \,\,\,\,\, R(E_{1}, E_{3})E_{1} = E_{3}, \,\,\,\,\, R(E_{1}, E_{3})E_{3} = -E_{1},\\
\nonumber & & R(E_{1}, E_{4})E_{1} = E_{4}, \,\,\,\,\, R(E_{1}, E_{4})E_{4} = -E_{1}, \,\,\,\,\, R(E_{1}, E_{5})E_{1} = E_{5}, \,\,\,\,\, R(E_{1}, E_{5})E_{5} = -E_{1},\\
\nonumber & & R(E_{2}, E_{3})E_{2} = E_{3}, \,\,\,\,\, R(E_{2}, E_{3})E_{3} = -E_{2}, \,\,\,\,\, R(E_{2}, E_{4})E_{2} = E_{4}, \,\,\,\,\, R(E_{2}, E_{4})E_{4} = -E_{2},\\
\nonumber & & R(E_{2}, E_{5})E_{2} = E_{5}, \,\,\,\,\, R(E_{2}, E_{5})E_{5} = -E_{2}, \,\,\,\,\, R(E_{3}, E_{4})E_{3} = E_{4}, \,\,\,\,\, R(E_{3}, E_{4})E_{4} = -E_{3},\\
\nonumber & & R(E_{3}, E_{5})E_{3} = E_{5}, \,\,\,\,\, R(E_{3}, E_{5})E_{5} = -E_{3}, \,\,\,\,\, R(E_{4}, E_{5})E_{4} = E_{5}, \,\,\,\,\, R(E_{4}, E_{5})E_{5} = -E_{4},
\end{eqnarray}
 and
\begin{eqnarray}
\nonumber & & \tilde{R}(E_{1}, E_{2})E_{1} = 4E_{2}, \,\,\,\,\, \tilde{R}(E_{1}, E_{2})E_{2} = -4E_{1}, \,\,\,\,\, \tilde{R}(E_{1}, E_{3})E_{1} = 4E_{3}, \,\,\,\,\, \tilde{R}(E_{1}, E_{3})E_{3} = -4E_{1},\\
\nonumber & & \tilde{R}(E_{1}, E_{4})E_{1} = 4E_{4}, \,\,\,\,\, \tilde{R}(E_{1}, E_{4})E_{4} = -4E_{1}, \,\,\,\,\, \tilde{R}(E_{1}, E_{5})E_{1} = 2E_{5}, \,\,\,\,\,\,\,\,
 \tilde{R}(E_{1}, E_{5})E_{5} = -2E_{1},\\
\nonumber & & \tilde{R}(E_{2}, E_{3})E_{2} = 4E_{3}, \,\,\,\,\, \tilde{R}(E_{2}, E_{3})E_{3} = -4E_{2}, \,\,\,\,\, \tilde{R}(E_{2}, E_{4})E_{2} = 4E_{4}, \,\,\,\,\, \tilde{R}(E_{2}, E_{4})E_{4} = -4E_{2},\\
\nonumber & & \tilde{R}(E_{2}, E_{5})E_{2} = 2E_{5}, \,\,\,\,\, \tilde{R}(E_{2}, E_{5})E_{5} = -2E_{2}, \,\,\,\,\, \tilde{R}(E_{3}, E_{4})E_{3} = 4E_{4}, \,\,\,\,\, \tilde{R}(E_{3}, E_{4})E_{4} = -4E_{3},\\
 & & \tilde{R}(E_{3}, E_{5})E_{3} = 2E_{5}, \, \tilde{R}(E_{3}, E_{5})E_{5} = -2E_{3}, \, \tilde{R}(E_{4}, E_{5})E_{4} = 2E_{5}, \, \tilde{R}(E_{4}, E_{5})E_{5} = -2E_{4} \label{8.1}.
\end{eqnarray}

\par Since ${E_{1}, E_{2}, E_{3}, E_{4}, E_{5}}$ forms a basis, any vector field $X, Y, Z \in \chi(M)$ can be
written as $X = a_{1}E_{1} + b_{1}E_{2} + c_{1}E_{3} +  d_{1}E_{4} + f_{1}E_{5}$, $Y = a_{2}E_{1} + b_{2}E_{2} + c_{2}E_{3} + d_{2}E_{4} + f_{2}E_{5}$, $Z= a_{3}E_{1} + b_{3}E_{2} + c_{3}E_{3} + d_{3}E_{4} + f_{3}E_{5}$, where $a_{i}, b_{i}, c_{i}, d_{i}, f_{i} \in \mathbb{R}^{+}$ (the set of all positive real numbers), $i = 1, 2, 3, 4, 5$. Using the expressions of the curvature tensors, we find values of Riemannian curvature and Ricci curvature with respect to quarter-symmetric metric connection as;
\begin{eqnarray}
\nonumber \tilde{R}(X, Y)Z &=& [-4\{a_{1}(b_{2}b_{3} + c_{2}c_{3} + d_{2}d_{3}) - a_{2}(b_{1}b_{3} + c_{1}c_{3} + d_{1}d_{3})\} -2 (a_{1}f_{2} - f_{1}a_{2})f_{3}] E_{1}\\
\nonumber &+& [-4\{b_{1}(a_{2}a_{3} + c_{2}c_{3} + d_{2}d_{3}) - b_{2}(a_{1}a_{3} + c_{1}c_{3} + d_{1}d_{3})\} -2 (b_{1}f_{2} - f_{1}b_{2})f_{3}] E_{2}\\
\nonumber &+& [-4\{c_{1}(a_{2}a_{3} + b_{2}b_{3} + d_{2}d_{3}) - c_{2}(a_{1}a_{3} + b_{1}b_{3} + d_{1}d_{3})\} -2 (c_{1}f_{2} - f_{1}c_{2})f_{3}] E_{3}\\
\nonumber &+& [-4\{d_{1}(a_{2}a_{3} + b_{2}b_{3} + c_{2}c_{3}) - d_{2}(a_{1}a_{3} + b_{1}b_{3} + c_{1}c_{3})\} -2 (d_{1}f_{2} - f_{1}d_{2})f_{3}] E_{4}\\
\nonumber &+& [2\{(a_{1}f_{2} - f_{1}a_{2})a_{3} + (b_{1}f_{2} - f_{1}b_{2})b_{3} + (c_{1}f_{2} - f_{1}c_{2})c_{3} + (d_{1}f_{2} - f_{1}d_{2})d_{3}\}] E_{5},  \label{8.2}\\
\tilde{S}(E_{1}, E_{1}) &=& \tilde{S}(E_{2}, E_{2})=\tilde{S}(E_{3}, E_{3})=\tilde{S}(E_{4}, E_{4})=-14, \,\, \tilde{S}(E_{5}, E_{5})=-8.\label{8.3}
\end{eqnarray}

In view of (\ref{8.1}), (\ref{8.3}) and the expression of the endomorphism one can easily verify the equation (\ref{7.4}) and hence the theorem \ref{thm2} of section \ref{sec4} is verified. This example also verifies the condition $c$ of theorem 3.1 in \cite{AKMUCD} and first Bianchi identity.

\par Above two examples verifies the one part of the theorem \ref{thm2}, that is, if a para-Sasakian manifold $M^n$ admitting a quarter-symmetric metric connection is Ricci pseudosymmetric then $M^n$ satisfies $L_{\tilde{S}}=-2$ ($M^n$ is not Einstein manifold with respect to quarter-symmetric metric connection). Another part of the theorem is that, if a para-Sasakian manifold $M^n$ admitting a quarter-symmetric metric connection is Ricci pseudosymmetric then $M^n$ is an Einstein manifold with respect to quarter-symmetric metric connection ($L_{\tilde{S}}\neq-2$). Now, the second part of the theorem \ref{thm2} can be verified by using the proper example.

\subsection{\bf Example}
\par We consider a 5-dimensional manifold $M = \{(x, y, z, u, v)\in \mathbb{R}^5\}$, where $(x, y, z, u, v)$
are standard coordinates in $\mathbb{R}^5$. Let $\{E_{1}, E_{2}, E_{3}, E_{4}, E_{5}\}$ be a linearly independent global frame field on $M$ given by
\begin{equation}
\nonumber E_{1}= \frac{\partial}{\partial{x}}, \,\,\,\, E_{2}=e^{-x} \frac{\partial}{\partial{y}}, \,\,\,\, E_{3}=e^{-x} \frac{\partial}{\partial{z}}, \,\,\,\, E_{4}=e^{-x} \frac{\partial}{\partial{u}}, \,\,\,\, E_{5}=e^{-x}\frac{\partial}{\partial{v}}.
\end{equation}

\par Let $g$ be a Riemannian metric defined by
\[
    g(E_{i}, E_{j})=
\begin{cases}
    1, & i=j\\
    0, & i\neq j
\end{cases}
\]
for $1\leq i, j\leq 5,$ and if $\eta$ is the 1-form defined by $\eta(Z)=g(Z, E_{1})$ for any vector field $Z\in \chi(M)$. Let the (1, 1)-tensor field $\phi$ be defined by
\begin{equation}
\nonumber \phi (E_{1})= 0, \,\,\,\,\, \phi (E_{2})= E_{2}, \,\,\,\,\, \phi (E_{3})= E_{3}, \,\,\,\,\, \phi (E_{4})= E_{4}, \,\,\,\,\, \phi (E_{5})= E_{5}.
\end{equation}

With the help of linearity property of $\phi$ and $g$, we have
\begin{eqnarray}
\nonumber & & \eta(E_{1})=1, \\
\nonumber & & \phi^{2}V=V - \eta(V)E_{1}, \\
\nonumber & & g(\phi X, \phi Y)=g(X, Y)-\eta(X) \eta(Y),
\end{eqnarray}
for any $X, Y \in \chi(M)$.

\par Now we have
\begin{eqnarray*}
 && [E_{1}, E_{2}]=-E_{2}, \,\,\,\,\,\,\,\, [E_{1}, E_{3}]=-E_{3}, \,\,\,\,\,\,\,\, [E_{1}, E_{4}]=-E_{4}, \,\,\,\,\,\,\,\, [E_{1}, E_{5}]=-E_{5},\\
 && [E_{2}, E_{3}] = [E_{2}, E_{4}]= [E_{2}, E_{5}]= [E_{3}, E_{4}]= [E_{3}, E_{5}]= E_{4}, E_{5}]=0.
\end{eqnarray*}

With the help of Koszul's formula we get the followings in matrix form
\begin{equation}
\nonumber \left(
  \begin{array}{ccccc}
   \nabla_{E_{1}}E_{1} &\nabla_{E_{1}}E_{2} &\nabla_{E_{1}}E_{3} &\nabla_{E_{1}}E_{4} &\nabla_{E_{1}}E_{5}\\
   \nabla_{E_{2}}E_{1} &\nabla_{E_{2}}E_{2} &\nabla_{E_{2}}E_{3} &\nabla_{E_{2}}E_{4} &\nabla_{E_{2}}E_{5} \\
   \nabla_{E_{3}}E_{1} &\nabla_{E_{3}}E_{2} &\nabla_{E_{3}}E_{3} &\nabla_{E_{3}}E_{4} &\nabla_{E_{3}}E_{5} \\
  \nabla_{E_{4}}E_{1} &\nabla_{E_{4}}E_{2} &\nabla_{E_{4}}E_{3} &\nabla_{E_{4}}E_{4} &\nabla_{E_{4}}E_{5} \\
   \nabla_{E_{5}}E_{1} &\nabla_{E_{5}}E_{2} &\nabla_{E_{5}}E_{3} &\nabla_{E_{5}}E_{4} &\nabla_{E_{5}}E_{5} \\
  \end{array}
\right) =\left(
           \begin{array}{ccccc}
             0 & 0 & 0 & 0 & 0 \\
             E_{2} & -E_{1} & 0 & 0 & 0  \\
             E_{3} & 0 & -E_{1} & 0 & 0 \\
              E_{4} & 0 & 0 & -E_{1} & 0 \\
              E_{5} & 0 & 0 & 0 & - E_{1}   \\
           \end{array}
         \right).
\end{equation}

\par In this case, $(\phi, \xi, \eta, g)$ is a para-Sasakian structure on $M$ and hence $M(\phi, \xi, \eta, g)$ is a 5-dimensional para-Sasakian manifold.

Using (\ref{1.11}) and the above equation, one can easily obtain the following:
\begin{equation}
\nonumber \left(
  \begin{array}{ccccc}
   \tilde{\nabla}_{E_{1}}E_{1} &\tilde{\nabla}_{E_{1}}E_{2} &\tilde{\nabla}_{E_{1}}E_{3} &\tilde{\nabla}_{E_{1}}E_{4} &\tilde{\nabla}_{E_{1}}E_{5}\\
   \tilde{\nabla}_{E_{2}}E_{1} &\tilde{\nabla}_{E_{2}}E_{2} &\tilde{\nabla}_{E_{2}}E_{3} &\tilde{\nabla}_{E_{2}}E_{4} &\tilde{\nabla}_{E_{2}}E_{5} \\
   \tilde{\nabla}_{E_{3}}E_{1} &\tilde{\nabla}_{E_{3}}E_{2} &\tilde{\nabla}_{E_{3}}E_{3} &\tilde{\nabla}_{E_{3}}E_{4} &\tilde{\nabla}_{E_{3}}E_{5} \\
  \tilde{\nabla}_{E_{4}}E_{1} &\tilde{\nabla}_{E_{4}}E_{2} &\tilde{\nabla}_{E_{4}}E_{3} &\tilde{\nabla}_{E_{4}}E_{4} &\tilde{\nabla}_{E_{4}}E_{5} \\
   \tilde{\nabla}_{E_{5}}E_{1} &\tilde{\nabla}_{E_{5}}E_{2} &\tilde{\nabla}_{E_{5}}E_{3} &\tilde{\nabla}_{E_{5}}E_{4} &\tilde{\nabla}_{E_{5}}E_{5} \\
  \end{array}
\right) =\left(
           \begin{array}{ccccc}
             0 & 0 & 0 & 0 & 0 \\
             2E_{2} & -2E_{1} & 0 & 0 & 0  \\
             2E_{3} & 0 & -2E_{1} & 0 & 0 \\
              2E_{4} & 0 & 0 & -2E_{1} & 0 \\
              2E_{5} & 0 & 0 & 0 & -2E_{1}   \\
           \end{array}
         \right).
\end{equation}

\par From above results it can be easily obtain the non-zero components of Riemannian curvature and Ricci curvature tensors as
\begin{eqnarray}
\nonumber & & R(E_{1}, E_{2})E_{1} = E_{2}, \,\,\,\,\, R(E_{1}, E_{2})E_{2} = -E_{1}, \,\,\,\,\, R(E_{1}, E_{3})E_{1} = E_{3}, \,\,\,\,\, R(E_{1}, E_{3})E_{3} = -E_{1},\\
\nonumber & & R(E_{1}, E_{4})E_{1} = E_{4}, \,\,\,\,\, R(E_{1}, E_{4})E_{4} = -E_{1}, \,\,\,\,\, R(E_{1}, E_{5})E_{1} = E_{5}, \,\,\,\,\, R(E_{1}, E_{5})E_{5} = -E_{1},\\
\nonumber & & R(E_{2}, E_{3})E_{2} = E_{3}, \,\,\,\,\, R(E_{2}, E_{3})E_{3} = -E_{2}, \,\,\,\,\, R(E_{2}, E_{4})E_{2} = E_{4}, \,\,\,\,\, R(E_{2}, E_{4})E_{4} = -E_{2},\\
\nonumber & & R(E_{2}, E_{5})E_{2} = E_{5}, \,\,\,\,\, R(E_{2}, E_{5})E_{5} = -E_{2}, \,\,\,\,\, R(E_{3}, E_{4})E_{3} = E_{4}, \,\,\,\,\, R(E_{3}, E_{4})E_{4} = -E_{3},\\
\nonumber & & R(E_{3}, E_{5})E_{3} = E_{5}, \,\,\,\,\, R(E_{3}, E_{5})E_{5} = -E_{3}, \,\,\,\,\, R(E_{4}, E_{5})E_{4} = E_{5}, \,\,\,\,\, R(E_{4}, E_{5})E_{5} = -E_{4},
\end{eqnarray}
 and
\begin{eqnarray}
\nonumber & & \tilde{R}(E_{1}, E_{2})E_{1} = 2E_{2}, \,\,\,\,\, \tilde{R}(E_{1}, E_{2})E_{2} = -2E_{1}, \,\,\,\,\, \tilde{R}(E_{1}, E_{3})E_{1} = 2E_{3}, \,\,\,\,\, \tilde{R}(E_{1}, E_{3})E_{3} = -2E_{1},\\
\nonumber & & \tilde{R}(E_{1}, E_{4})E_{1} = 2E_{4}, \,\,\,\,\, \tilde{R}(E_{1}, E_{4})E_{4} = -2E_{1}, \,\,\,\,\, \tilde{R}(E_{1}, E_{5})E_{1} = 2E_{5}, \,\,\,\,\,\,\,\,
 \tilde{R}(E_{1}, E_{5})E_{5} = -2E_{1},\\
\nonumber & & \tilde{R}(E_{2}, E_{3})E_{2} = 2E_{3}, \,\,\,\,\, \tilde{R}(E_{2}, E_{3})E_{3} = -2E_{2}, \,\,\,\,\, \tilde{R}(E_{2}, E_{4})E_{2} = 2E_{4}, \,\,\,\,\, \tilde{R}(E_{2}, E_{4})E_{4} = -2E_{2},\\
\nonumber & & \tilde{R}(E_{2}, E_{5})E_{2} = 2E_{5}, \,\,\,\,\, \tilde{R}(E_{2}, E_{5})E_{5} = -2E_{2}, \,\,\,\,\, \tilde{R}(E_{3}, E_{4})E_{3} = 2E_{4}, \,\,\,\,\, \tilde{R}(E_{3}, E_{4})E_{4} = -2E_{3},\\
 & & \tilde{R}(E_{3}, E_{5})E_{3} = 2E_{5}, \, \tilde{R}(E_{3}, E_{5})E_{5} = -2E_{3}, \, \tilde{R}(E_{4}, E_{5})E_{4} = 2E_{5}, \, \tilde{R}(E_{4}, E_{5})E_{5} = -2E_{4} \label{9.1},\\
&&\tilde{S}(E_{1}, E_{1}) = \tilde{S}(E_{2}, E_{2})=\tilde{S}(E_{3}, E_{3})=\tilde{S}(E_{4}, E_{4})=\tilde{S}(E_{5}, E_{5})=-8,\label{9.3}\\
\nonumber && \tilde{S}(X, Y)=-2(5-1)g(X, Y)=-8g(X, Y),
\end{eqnarray}
where $X = a_{1}E_{1} + b_{1}E_{2} + c_{1}E_{3} +  d_{1}E_{4} + f_{1}E_{5}$ and $Y = a_{2}E_{1} + b_{2}E_{2} + c_{2}E_{3} + d_{2}E_{4} + f_{2}E_{5}$.

 From (\ref{9.1}), (\ref{9.3}) and the expression of the endomorphism one can easily substantiate, the equation (\ref{7.4}) and hence second part of the theorem \ref{thm2} (for $L_{\tilde{S}}\neq-2$).

\begin{flushleft}
Vishnuvardhana. S.V.\\
Department of Mathematics,\\
Gitam Deemed to be University,\\
Doddaballapur, Bengaluru,\\
Karnataka-561 203, INDIA\\
\pn{\tt e-mail:{\verb+svvishnuvardhana@gmail.com+}}\\
\
\\
Venkatesha\\
Department of Mathematics,\\
Kuvempu University,\\
Shankaraghatta - 577 451, Shimoga,\\
Karnataka, INDIA.\\
\pn{\tt e-mail:{\verb+vensmath@gmail.com+}}\\
\end{flushleft}
\end{document}